\documentclass{article}
\usepackage{amsmath,amssymb,theorem}
\usepackage{graphicx}
\usepackage{verbatim}

\usepackage{color}
\usepackage{makeidx}


\theorembodyfont{\upshape}

\setlength{\textwidth}{27pc}
\setlength{\textheight}{43pc}


\def\bino{\text{Bin}}




 \makeatletter
 \@addtoreset{equation}{section}
 \makeatother

 \newtheorem{ittheorem}{Theorem}
 \newtheorem{itlemma}{Lemma}
 \newtheorem{itproposition}{Proposition}
 \newtheorem{itdefinition}{Definition}

 \newtheorem{itremark}{Remark}
 \newtheorem{itclaim}{Claim}
 \newtheorem{itcorollary}{\bf Corollary}

 \newenvironment{theorem}{\addtocounter{equation}{1}
 \begin{ittheorem}}{\end{ittheorem}}

 \newenvironment{lemma}{\addtocounter{equation}{1}
 \begin{itlemma}}{\end{itlemma}}

 \newenvironment{proposition}{\addtocounter{equation}{1}
 \begin{itproposition}}{\end{itproposition}}

 \newenvironment{definition}{\addtocounter{equation}{1}
 \begin{itdefinition}}{\end{itdefinition}}

 \newenvironment{remark}{\addtocounter{equation}{1}
 \begin{itremark}}{\end{itremark}}

 \newenvironment{claim}{\addtocounter{equation}{1}
 \begin{itclaim}}{\end{itclaim}}

 \newenvironment{proof}{\noindent {\bf Proof.\,}
 }{\hspace*{\fill}$\qed$\medskip}

 \newenvironment{corollary}{\addtocounter{equation}{1}
 \begin{itcorollary}}{\end{itcorollary}}

 \newcommand{\be}[1]{\begin{eqnarray*}\label{#1}}
 \newcommand{\ee}{\end{eqnarray*}}

 \newcommand{\bl}[1]{\begin{lemma}\label{#1}}
 \newcommand{\el}{\end{lemma}}

 \newcommand{\br}[1]{\begin{remark}\label{#1}}
 \newcommand{\er}{\end{remark}}

 \newcommand{\bt}[1]{\begin{theorem}\label{#1}}
 \newcommand{\et}{\end{theorem}}

 \newcommand{\bd}[1]{\begin{definition}\label{#1}}
 \newcommand{\ed}{\end{definition}}

 \newcommand{\bcl}[1]{\begin{claim}\label{#1}}
 \newcommand{\ecl}{\end{claim}}

 \newcommand{\bp}[1]{\begin{proposition}\label{#1}}
 \newcommand{\ep}{\end{proposition}}

 \newcommand{\bc}[1]{\begin{corollary}\label{#1}}
 \newcommand{\ec}{\end{corollary}}

 \newcommand{\bpr}{\begin{proof}}
 \newcommand{\epr}{\end{proof}}

 \newcommand{\bi}{\begin{itemize}}
 \newcommand{\ei}{\end{itemize}}

 \newcommand{\ben}{\begin{enumerate}}
 \newcommand{\een}{\end{enumerate}}

\newcommand{\tatop}[2]{\genfrac{}{}{0pt}{1}{#1}{#2}}


\def\uro{\smash{{U}^{\!\!\!\!\raise5pt\hbox{$\scriptstyle o$}}}}

\def\bp{{\overline{p}}}

\def\bp{{\overline{p}}}

 \def \ba {\begin{array}}
 \def \ea {\end{array}}

 \def \qed {{\heartsuit\hfill}}

 \def \N {{\mathbb N}}

\def \qed {{\square\hfill}}



%


\def \qed {{\square\hfill}}

\def\N{{\mathbb N}}


\def\eqref#1{(\ref{#1})}



\makeindex

\begin{document}
\allowdisplaybreaks[4]

\title{A basic model of mutations}

\author{ Maxime Berger
\footnote{
\noindent
D\'epartement de math\'ematiques et applications, Ecole Normale Sup\'erieure,
CNRS, PSL Research University, 75005 Paris}
\hskip 70pt 
Rapha\"el Cerf 
\footnotemark[1]
\footnote{
Laboratoire de Math\'ematiques d'Orsay, Universit\'e Paris-Sud, CNRS, Universit\'e
Paris--Saclay, 91405 Orsay.}
}

\maketitle



\begin{abstract}
\noindent
We study a basic model for mutations. We derive exact formulae for the mean time needed to discover
	the master sequence, the mean returning
time to the initial state, or to any Hamming class. These last two formulae are
	the same than the formulae
	obtained by Mark Kac for the Ehrenfest model.
\end{abstract}



\section{\bf Introduction}
According to the Darwinian paradigm,
the evolution of living creatures is driven by two main forces: mutations
and selection. Mutations create
new forms of behaviour or new characters, 
some less fit to their environment, 
some more, 
whereas  selection favors the reproduction of fitter individuals.
On the one hand, mutations may discover very fit characters but without selection,
they would be quickly erased by further mutations.
On the other hand, selection alone would result in uniform populations, lacking in genetic diversity.
The success of an evolutionary process rests on a subtle interaction between mutations
and selection.

Let us consider for instance
a population of HIV viruses or {\itshape Drosophila melanogaster}. 
The genetic material of one individual, also called its genotype, is encoded into its DNA, which is
a long chain of nucleobases A,T,G or C.
To simplify the analysis, we suppose here that there are only two types of nucleobases
instead of four, and we denote them by $0$ or $1$.
Selection enters the game through fitness.
The fitness describes the adaptation of the individuals to the environnement.
The fitness of an individual can be thought as a function of its genotype.
For instance, a possible choice for the fitness function is the expected number of offspring 
of the individual.
We consider the situation where all the genotypes are equally fitted, except one,
say $0\cdots 0$, which has a superior fitness.
There exist several 
mathematical models of evolution combining mutations and selection.
The simplest one 
is perhaps the Moran model, whose dynamics is the following.
At each time step, one individual dies, while one individual gives birth to a child 
(in particular, the population size stays constant).
All individuals are equally likely to die, but the fitter individuals having
genotype $0\cdots 0$ reproduce more often.
Mutations occur during reproduction: the genotype of the child is not an exact
copy of the one of its parent.

A central question is then to determine the proportion of individuals 
with genotype $0\cdots 0$
in the population after a long time. 
To answer this question, one should understand how long it takes for 
a population to escape from
the set of the selectively neutral genotypes. We perform this crucial step here.
We consider one single individual, we follow his lineage and we 
compute the time needed for the genotype $0\cdots 0$ to be discovered.

\noindent
{\bf A model for mutations}.
We follow the genotypes of the individuals along a lineage.
We fix the probability of mutation $p\in (0,1)$.
We start from a string $X_0$ in $\{0,1\}^N$, which represents the genotype of the
first individual.  
We denote by $X_n$ the genotype of the $n$--th individual in the lineage.
At time $n$, for each bit of $X_n$, we toss a coin of parameter~$p$ to decide whether a
mutation occurs on the bit, in which case it is transformed into the complementary digit.
All the coins are taken independent, so the probability of having no mutations at all at time $n$ is $(1-p)^n$.
The assumption of independance simplifies considerably the mathematical analysis, and it is also
biologically plausible. Indeed the mutations arise because of transcription errors during the replication
process, and for long genomes, they are not correlated. It seems however that the mutation probability $p$
varies along the genome, 
so the next modelling step would be to incorporate this spatial dependance into the model. 
We end up with a random walk $(X_n)_{n\in\N}$
on the hypercube $\{\,0,1\,\}^N$, for which the transition probabilities only depend on the number of differences between two states.
Let 
$\tau_0$ be
the hitting time of the master sequence $0\cdots 0$, i.e.,
$$\tau_0\,=\,\inf\,\big\{\,n\geq 1:X_n=0\cdots 0\,\big\}\,.$$
The goal is then to compute the expected value of $\tau_0$. 

\noindent
{\bf Lumping.}
The smart way to analyze this random walk is to lump together the states of the hypercube into 
Hamming classes.
The Hamming class number $i$ consists of the points which have $i$
digits equal to $1$ and $N-i$ equal to $0$.
So we define a new process $(Y_n)_{n\in\N}$ by setting
\begin{equation*}
Y_n\,=\,\text{number of digits of $X_n$ equal to $1$}
	\,.
\end{equation*}
We obtain a Markov chain with state space $\{\,0,\dots,N\,\}$,
and we shall provide explicit formulas for its mean passage times.

\noindent
{\bf Discovering and recovering time.}
For $0\leq j\leq N$, we define
the hitting time of the Hamming class $j$ by
$$\tau_j\,=\,\inf\,\big\{\,n\geq 1:Y_n=j\,\big\}\,.$$
The three theorems 
below give exact formulas for the expected value of $\tau_j$, when starting
from $0$, $N$, or $j$. These formulas are surprisingly simple and come out from tricky computations.
The discovering time of the master sequence is bounded from above by 
the traversal time, which is the time
needed to reach the class $0$ starting from the class $N$. 
This corresponds to the situation where we start with a string containing only ones, and 
we wait until we see a string containing only zeroes.
\begin{theorem}
\label{trav}
The mean traversal time is given by
$$E\big(\tau_0\,\big|\,Y_0=N\big)\,=\,
\sum_{k=1}^N
\displaystyle{N \choose {k}}
	\frac{1-(-1)^k}{1-(1-2p)^k}\,.$$
\end{theorem}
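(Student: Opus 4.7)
The plan is to exploit the independence of the $N$ coordinates: under this mutation dynamics each bit of $X_n$ evolves as an independent two-state Markov chain flipping with probability $p$ at every step, whose $n$-step transition probabilities have the closed form $(1 \pm (1-2p)^n)/2$. I would first record the two quantities
\begin{align*}
a_n &:= P(X_n = 0\cdots 0 \mid X_0 = 1\cdots 1) = \left(\tfrac{1-(1-2p)^n}{2}\right)^N,\\
b_n &:= P(X_n = 0\cdots 0 \mid X_0 = 0\cdots 0) = \left(\tfrac{1+(1-2p)^n}{2}\right)^N,
\end{align*}
which, since $\{Y_n=0\} = \{X_n = 0\cdots 0\}$, are also the corresponding hitting and return probabilities for the lumped chain.

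Next I would decompose $a_n$ according to the first visit of $0\cdots 0$ by $X_n$, obtaining the renewal identity
$$a_n \,=\, \sum_{k=1}^n P(\tau_0 = k \mid Y_0 = N)\, b_{n-k} \qquad (n \geq 0).$$
With the generating functions $A(z) = \sum_n a_n z^n$, $B(z) = \sum_n b_n z^n$ and $\phi(z) = E(z^{\tau_0} \mid Y_0 = N)$, this reads $A(z) = \phi(z)\, B(z)$; and since the chain is finite and irreducible so that $E(\tau_0) < \infty$ and $\phi$ is smooth at $1$, an Abelian argument yields
$$E(\tau_0 \mid Y_0 = N) \,=\, \lim_{z\uparrow 1} \frac{1 - \phi(z)}{1 - z} \,=\, \lim_{z\uparrow 1} \frac{B(z) - A(z)}{(1-z)\, B(z)}.$$

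To finish, I would expand $a_n$ and $b_n$ via the binomial theorem and sum each resulting geometric series in $z$. The key observation is twofold: the $k=0$ contribution (which produces the only pole at $z=1$) cancels in $b_n - a_n$, and the remaining terms collapse onto the odd values of $k$, because $1-(-1)^k = 0$ for $k$ even. This gives
$$B(z) - A(z) \,=\, \frac{1}{2^{N-1}} \sum_{\substack{k=1\\ k\text{ odd}}}^{N} \binom{N}{k}\, \frac{1}{1-(1-2p)^k z}, \qquad (1-z)B(z) \xrightarrow[z\uparrow 1]{} \frac{1}{2^N},$$
and dividing these two, together with the identity $2\sum_{k\text{ odd}} = \sum_{k=1}^N (1-(-1)^k)$, produces exactly the stated formula.

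The step I expect to be the most delicate is the clean cancellation of the pole and the justification of the Abelian limit; but since the $k$-sums are finite and $|(1-2p)^k| < 1$ for $k \geq 1$, everything is rigorous. As a sanity check, at $p = 1/2$ the walk is uniform at each step, $\tau_0$ is geometric of parameter $2^{-N}$, and the right-hand side collapses to $\sum_{k\text{ odd}} \binom{N}{k} \cdot 2 = 2^N$, as required.
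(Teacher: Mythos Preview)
Your proposal is correct and follows essentially the same route as the paper: the same renewal/first-passage identity $A(z)=\phi(z)B(z)$ (the paper's equation for $F_{N0}=G_{N0}(1+F_{00})$), the same binomial expansion of $((1\pm(1-2p)^n)/2)^N$ into geometric series, and the same extraction of the derivative at $z=1$ by isolating the unique pole coming from the $k=0$ term. The only cosmetic difference is that the paper Taylor-expands $G_{N0}(z)=A(z)/B(z)$ directly around $z=1$, whereas you rewrite $\phi'(1)$ as the limit of $(B-A)/((1-z)B)$ and observe the pole cancels in the numerator; these are the same computation.
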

The recovering time of the master sequence corresponds to the 
returning time to the class $0$ when starting away from a genotype different from $0\cdots 0$.
This time is bounded from below by the mean return time to $0$ starting from $0$, which we compute next.
\begin{theorem}
\label{return}
The mean returning time to the class $0$ is given by
$$E\big(\tau_0\,\big|\,Y_0=0\big)\,=\,2^N
\,.$$
\end{theorem}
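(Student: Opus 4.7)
The plan is to apply Kac's formula for the mean return time: for an irreducible positive recurrent Markov chain with stationary distribution $\pi$, the mean return time to a state $i$ equals $1/\pi(i)$. So it suffices to identify $\pi_Y(0)$, the stationary probability that the lumped chain $(Y_n)$ is in the Hamming class $0$.

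First I would exploit the product structure of the unlumped chain $(X_n)$. Since at every step each bit of $X_n$ is flipped independently with probability $p$, the coordinates $(X_n^{(1)},\dots,X_n^{(N)})$ evolve as $N$ independent copies of a two-state Markov chain on $\{0,1\}$ with transition matrix
$$\begin{pmatrix} 1-p & p \\ p & 1-p \end{pmatrix}.$$
This transition matrix is doubly stochastic, so its stationary distribution is uniform on $\{0,1\}$. By independence, the stationary distribution of $(X_n)$ is therefore the uniform distribution on $\{0,1\}^N$, assigning mass $2^{-N}$ to each configuration.

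Second, since the lumped chain $(Y_n)$ groups together the states of a single Hamming class, its stationary distribution is the image of the uniform distribution on $\{0,1\}^N$ under the map $x\mapsto\sum_i x_i$. This image is the binomial distribution
$$\pi_Y(i)\,=\,\binom{N}{i}\,2^{-N},\qquad i\in\{0,\dots,N\},$$
so in particular $\pi_Y(0)=2^{-N}$. Irreducibility and aperiodicity of $(Y_n)$ are immediate since $p\in(0,1)$ (for $p\neq 1/2$ the one-step transition probabilities between adjacent classes are strictly positive, and the self-loop probability at class $0$ is $(1-p)^N>0$; for $p=1/2$ each step already yields a uniformly distributed $X_n$). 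Applying Kac's formula,
$$E\big(\tau_0\,\big|\,Y_0=0\big)\,=\,\frac{1}{\pi_Y(0)}\,=\,2^N,$$
which is the claim. There is no real obstacle: the argument is essentially the same one that Kac used for the Ehrenfest model, and it depends only on the fact that $N$ independent symmetric two-state chains have an equilibrium measure putting mass $2^{-N}$ on the all-zero configuration. The independence across coordinates, which is built into our mutation model, is precisely what makes this product computation of $\pi_X$ so straightforward.
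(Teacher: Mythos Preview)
Your proof is correct. The paper itself remarks, immediately after stating the theorem, that the formula ``is also a straightforward consequence of the classical result expressing the invariant probability measure of a Markov chain in terms of mean recurrence times,'' which is exactly the route you take.

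The paper's own proof, however, deliberately avoids Kac's formula and instead pushes through the generating-function machinery set up for Theorem~\ref{trav}: it computes $1+F_{00}(z)=2^{-N}\sum_k\binom{N}{k}(1-(1-2p)^kz)^{-1}$, writes $G_{00}(z)=1-\big(1+F_{00}(z)\big)^{-1}$, and reads off $G'_{00}(1)=2^N$ from the local expansion at $z=1$. Your argument is shorter and more conceptual, relying only on the doubly stochastic structure of the single-bit transition matrix and the product form of the chain; the paper's approach is longer but has the virtue of being a uniform method that also yields Theorems~\ref{trav} and~\ref{greturn}, where no stationary-measure shortcut is available. One small remark: your case split on $p=1/2$ for irreducibility is unnecessary, since for every $p\in(0,1)$ the one-step transition matrix of $(X_n)$ already has all entries strictly positive.
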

Of course, the formula of theorem~\ref{return} is also a straightforward consequence of the classical
result expressing the invariant probability measure of a Markov chain in terms of mean recurrence times.
However, it does not seem that the other formulas presented in theorems~\ref{trav} or~\ref{greturn}
are easy consequences of more general results.
We compute next a beautiful formula for the returning time to the class~$j$ when starting away from a genotype of the same class.
\begin{theorem}
\label{greturn}
For $1\leq j\leq N$,
the mean returning time to the class $j$ is 
$$E\big(\tau_j\,\big|\,Y_0=j\big)\,=\,
\frac{
\displaystyle
2^N}{\displaystyle{N \choose {j}}
}\,.$$
\end{theorem}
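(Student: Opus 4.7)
My plan is to identify the invariant distribution of the lumped chain $(Y_n)_{n\in\N}$ and then apply the classical mean return time formula for irreducible finite Markov chains, in the same spirit as the argument the authors sketch for theorem~\ref{return}.

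First I would note that the underlying hypercube chain $(X_n)$ has one-step kernel
$$P(x,y)\,=\,p^{d(x,y)}(1-p)^{N-d(x,y)},$$
where $d(\cdot,\cdot)$ denotes Hamming distance. Since this depends on $(x,y)$ only through $d(x,y)$, the kernel is symmetric, so $(X_n)$ is reversible with respect to the uniform probability measure $u$ on $\{0,1\}^N$. In particular $u$ is invariant, and since $p\in(0,1)$ we have $P(x,y)>0$ for every pair $(x,y)$, so the chain is irreducible and aperiodic.

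Next I would check that $(Y_n)$ is itself a Markov chain. The dynamics of $(X_n)$ are invariant under the action of the symmetric group $\mathfrak{S}_N$ permuting coordinates of the hypercube, and therefore any two states of a common Hamming class have the same probability of transitioning to any given Hamming class. Dynkin's lumping criterion then yields a Markov chain on $\{0,\dots,N\}$, and pushing $u$ forward along $X_n\mapsto Y_n$ gives the binomial invariant distribution $\pi(j)=\binom{N}{j}/2^N$.

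The last step is to invoke the classical identity
$$E\big(\tau_j\,\big|\,Y_0=j\big)\,=\,\frac{1}{\pi(j)}\,=\,\frac{2^N}{\binom{N}{j}},$$
since $\tau_j=\inf\{n\geq 1:Y_n=j\}$ is by definition the first return time to $j$ when $Y_0=j$.

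The main obstacle is really only the lumping step, which one either carries out via Dynkin's criterion as above or by writing the transition probabilities of $(Y_n)$ explicitly and checking they depend on the starting state only through its Hamming class. Despite the authors' remark, the formula seems to me to be an immediate consequence of the general Markov chain machinery, once one lifts the computation to the hypercube and projects rather than treating $(Y_n)$ in isolation on $\{0,\dots,N\}$; it may well be that the authors prefer to give a direct, combinatorial derivation so as to keep the paper self-contained.
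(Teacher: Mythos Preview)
Your proof is correct and takes a genuinely different route from the paper. The paper follows Kac's generating function method: it computes $P_j(Y_n=j)$ explicitly via formula~\eqref{genfor}, sums into $1+F_{jj}(z)$, expands near $z=1$ using the Vandermonde-type identity $\sum_\ell \binom{j}{\ell}\binom{N-j}{\ell}=\binom{N}{j}$ (proved as a separate lemma), and reads off $G'_{jj}(1)$ from $G_{jj}=1-1/(1+F_{jj})$. You instead lift to the hypercube, observe that the transition kernel is symmetric so that the uniform measure is stationary, lump to obtain the binomial invariant law $\pi(j)=\binom{N}{j}/2^N$, and invoke the mean recurrence time identity $E_j(\tau_j)=1/\pi(j)$. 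Your argument is shorter and more conceptual, and it shows that the authors' remark after theorem~\ref{return} --- that theorem~\ref{greturn} does not appear to be an easy consequence of more general results --- is a bit too pessimistic: the trick is simply to identify the invariant measure on the hypercube and push it forward, rather than work directly on $\{0,\dots,N\}$. The paper's approach, on the other hand, is uniform with its treatment of theorems~\ref{trav} and~\ref{return}, keeps the exposition self-contained without appeal to the ergodic/recurrence theorem, and the generating function machinery it develops is in any case indispensable for theorem~\ref{trav}, where no such shortcut via the stationary distribution is available.
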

	From 
	theorems~\ref{trav} and~\ref{return}, it is easy to infer an estimate on the mean returning time
	of $0$ which is uniform with respect to the starting point. Indeed, a standard coupling
	argument yields that, for any starting string $x_0$,
$$E\big(\tau_0\,\big|\,Y_0=0\big)\,\leq\,
E\big(\tau_0\,\big|\,X_0=x_0\big)\,\leq\,
E\big(\tau_0\,\big|\,Y_0=N\big)\,.$$
Taking into account theorems~\ref{trav} and~\ref{return}, we conclude that
$$2^N\,\leq\,
E\big(\tau_0\,\big|\,X_0=x_0\big)\,\leq\,
\sum_{k=1}^N
\displaystyle{N \choose {k}}
	\frac{1-(-1)^k}{1-(1-2p)^k}\,\leq\,
	\frac{2^N}{p}\,.$$
	These inequalities show that the discovering and the recovering times of the master
	sequence are of order $2^N$.  
	It turns out that these results are akin to those related to an old classical model, 
	the Ehrenfest model, that we describe briefly.

\noindent
{\bf The Ehrenfest model.}
Let us consider $N$ balls and two boxes.
Initially, all the balls are in the first box. At each time step, one ball is selected
at random and is moved from its current box to the other box. The central question is then the following:
\smallskip

\centerline{On average, how long will it take 
to return to the initial state?}
\smallskip

\noindent
In 1947, 
Mark Kac gave a simple answer to this question in a celebrated paper \cite{KAC}.
He considered the evolution of the number of balls in the first box. This process is a Markov chain on
$\{\,0,\cdots,N\,\}$, which is quite different from our process $(Y_n)_{n\in\N}$. For instance,
its increments are either $-1,0$ or $+1$.
Mark Kac showed that,
starting from $0$, 
the average time for the Ehrenfest process to return to its initial state is equal to
$2^N$, which is the analog of theorem~\ref{return}.
He showed also that,
when starting from the state $j$, 
the average time until return to $j$ is equal to
	${ 2^N}/{N \choose {j}}$. Theorem~\ref{greturn} gives the analogous result for our model of mutations.
%
	\smallskip

	\noindent
{\bf A glimpse of potential theory}.
We attack here the general case, i.e., we look for a formula for
the mean returning time to the class $j$,
when the process starts from the class $i$.
We start from the formula obtained in theorem~\ref{trav}, for the specific case where $i=N$ and $j=0$.
We expand in a geometric series the denominator and we get
$$E\big(\tau_0\,\big|\,Y_0=N\big)\,=\,
\sum_{k=1}^N
\sum_{n\geq 0}
\displaystyle{N \choose {k}}
 \Big( (1-2p)^{nk}
- (-1)^k (1-2p)^{nk}\Big)
 \,.$$
We exchange the order of the summations and, using the formulas~\eqref{gutq}
and~\eqref{gutr}, we obtain
$$
E\big(\tau_0\,\big|\,Y_0=N\big)\,=\, 2^N \sum_{n \geq 0}\Big(P_0\big(Y_n=0\big)-P_N\big(Y_n=0\big)\Big)\,.
$$
From theorem~\ref{return}, we have also that
$E\big(\tau_0\,\big|\,Y_0=0\big)=2^N$. 
The above display formula is actually a particular case of a more general
identity valid for a large class of Markov chains, that we state in the next theorem.
\begin{theorem}
\label{general}
For any $i,j$ in $\{\,0,\dots,N\,\}$, we have
$$E\big(\tau_j\,\big|\,Y_0=i\big)\,=\,E\big(\tau_j\,\big|\,Y_0=j\big)
	\bigg(\sum_{n \geq 0}\Big(P_j\big(Y_n=j\big)-P_i\big(Y_n=j\big)\Big)\bigg)\,.
$$
\end{theorem}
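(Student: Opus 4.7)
The plan is to generalize to arbitrary $i\neq j$ the computation performed just before the statement for the pair $(i,j)=(N,0)$; the case $i=j$ being trivial on both sides. The tool is the probability generating function of the hitting time $\tau_j$, combined with a first-passage decomposition and a L'H\^opital/Abel argument. For $s\in[0,1)$, I would introduce
\[
U(s)\,=\,\sum_{n\geq 0} s^n\, P_j(Y_n=j),\qquad F_i(s)\,=\,E\bigl(s^{\tau_j}\,\big|\,Y_0=i\bigr)\,=\,\sum_{k\geq 1} s^k\, P\bigl(\tau_j=k\,\big|\,Y_0=i\bigr).
\]
Decomposing any passage of $Y$ through state $j$ at time $n$ according to the value of the first-passage time $\tau_j$, and using the strong Markov property, yields $\sum_{n\geq 0} s^n P(Y_n=j\,|\,Y_0=i)=F_i(s)U(s)$ for $i\neq j$, while the same decomposition at $i=j$ gives $U(s)\bigl(1-F_j(s)\bigr)=1$. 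Subtracting these two identities,
\[
\sum_{n\geq 0} s^n \bigl( P_j(Y_n=j) - P(Y_n=j\,|\,Y_0=i) \bigr)\,=\,\frac{1-F_i(s)}{1-F_j(s)}.
\]

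Next I would let $s\to 1^-$ and apply L'H\^opital's rule. The chain $(Y_n)$ is finite, irreducible and aperiodic (each state has a self-loop of probability at least $(1-p)^N>0$), hence recurrent with finite mean hitting times; in particular $F_i(1)=F_j(1)=1$, and $F_i'(1)=E(\tau_j\,|\,Y_0=i)$, $F_j'(1)=E(\tau_j\,|\,Y_0=j)$. L'H\^opital therefore gives that the right-hand side tends to the ratio $E(\tau_j\,|\,Y_0=i)/E(\tau_j\,|\,Y_0=j)$, and rearranging will produce the stated identity.

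The one technical point that requires care is passing the limit $s\to 1^-$ inside the series on the left-hand side; this is the main obstacle. Because the chain is finite and aperiodic, $P(Y_n=j\,|\,Y_0=k)$ converges to the invariant probability $\pi(j)$ exponentially fast, uniformly in the starting state $k$, so the series $\sum_n\bigl(P_j(Y_n=j)-P(Y_n=j\,|\,Y_0=i)\bigr)$ converges absolutely. Abel's theorem then identifies its sum with the $s\to 1^-$ limit of the corresponding power series, which closes the argument. As a sanity check, the specialization $i=N$, $j=0$ together with Theorem~\ref{return} reproduces the displayed equation preceding the theorem, and pairing with Theorem~\ref{greturn} recovers the explicit values $2^N/\binom{N}{j}$ for the prefactor.
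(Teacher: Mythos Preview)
Your argument is correct for $i\neq j$, but note that the case $i=j$ is not ``trivial on both sides'': the right-hand side of the identity vanishes while the left equals $E_j(\tau_j)>0$, so the formula as stated actually fails at $i=j$ (a defect of the theorem's phrasing, not of your method).

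For $i\neq j$ your route is genuinely different from the paper's. You extend to arbitrary $(i,j)$ the generating-function technique the paper already uses for theorems~\ref{trav}, \ref{return} and~\ref{greturn}: the renewal identity~\eqref{idgen} gives $\sum_{n\geq 0}s^n\bigl(P_j(Y_n=j)-P_i(Y_n=j)\bigr)=(1-F_i(s))/(1-F_j(s))$, and L'H\^opital together with Abel's theorem (justified by the exponential mixing of a finite aperiodic chain) extract the limit $E_i(\tau_j)/E_j(\tau_j)$. The paper instead abandons generating functions for this theorem and switches to potential theory: it introduces the Green matrix $G(i,k)=E_i\bigl[\sum_{n=0}^{\tau_j-1}1_{\{Y_n=k\}}\bigr]$, proves the matrix identity $GP=H+G-I$ (lemma~\ref{lempot}), telescopes to $G-GP^{m+1}=\sum_{n=0}^m(P^n-HP^n)$, and sends $m\to\infty$ via the ergodic convergence~\eqref{ergodic} combined with $\sum_k G(i,k)=E_i(\tau_j)$. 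Your approach is more self-contained, needing nothing beyond what section~\ref{clid} has already set up; the paper's approach situates the identity within potential theory and would generalize more transparently to hitting times of a set, where $H$ becomes a nontrivial exit distribution rather than a Dirac mass.
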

In the case of our specific model, we have further a formula for each term in the sum
(see formula~\eqref{genfor}). This way, we get an exact formula for
$E\big(\tau_j\,\big|\,Y_0=i\big)$, which we present in the next theorem.
\begin{theorem}
\label{greturngen}
For $1\leq j\leq N$,
the mean returning time to the class $j$ starting from class $i$ is given by
\begin{multline}
E\big(\tau_j\,\big|\,Y_0=i\big)\,=\\
\frac{1}
{\displaystyle{N \choose {j}}
}
\sum_{n\geq 0} \sum_{k=0}^N 
\Bigg(
{j \choose k} {N-j \choose k} - {i \choose j-k}{N-i \choose k} \Big(\frac{1-(1-2p)^n}{1+(1-2p)^n}\Big)^{i-j}
\Bigg) \\
\Big(1-(1-2p)^n\Big)^{2k} \Big(1+(1-2p)^n\Big)^{N-2k}
\,.
\end{multline}
\end{theorem}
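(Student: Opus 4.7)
The strategy is to combine Theorem~\ref{general} with Theorem~\ref{greturn} and a closed-form expression for the $n$-step transition probability $P_i(Y_n=j)$, which is referenced in the excerpt as formula~\eqref{genfor}. Theorem~\ref{general} expresses the desired expectation as the product of $E(\tau_j\mid Y_0=j)=2^N/\binom{N}{j}$ (supplied by Theorem~\ref{greturn}) with the Green-function-like sum $\sum_{n \geq 0}(P_j(Y_n=j)-P_i(Y_n=j))$. It suffices therefore to produce an explicit expression for each transition probability and then to recognize the resulting formula.

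The key step is the derivation of~\eqref{genfor}. Since mutations on distinct bits and at distinct times are independent, each coordinate evolves as a two-state Markov chain with flip probability $p$; diagonalising the $2\times 2$ transition matrix shows that after $n$ steps a given bit retains its initial value with probability $\alpha_n=(1+(1-2p)^n)/2$ and is flipped with probability $\beta_n=(1-(1-2p)^n)/2$. Starting from a string with $i$ ones, the law of $Y_n$ is that of an independent sum $\text{Bin}(i,\alpha_n)+\text{Bin}(N-i,\beta_n)$. Conditioning on the number $k$ of the $N-i$ initially-zero bits that have flipped to one (so that the $j-k$ remaining ones among the output must come from the $i$ initial ones) yields
\[
P_i(Y_n=j)\,=\,\sum_{k=0}^{N}\binom{i}{j-k}\binom{N-i}{k}\,\alpha_n^{\,N-i+j-2k}\,\beta_n^{\,i-j+2k}.
\]
Pulling out the common factor $\alpha_n^{N-2k}\beta_n^{2k}$ leaves a residual $(\beta_n/\alpha_n)^{i-j}$, which is precisely the ratio $\bigl((1-(1-2p)^n)/(1+(1-2p)^n)\bigr)^{i-j}$ appearing in the statement.

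To conclude, I form the difference $P_j(Y_n=j)-P_i(Y_n=j)$, recognising the first term as the specialisation $i=j$ of the expansion above after using the symmetry $\binom{j}{j-k}=\binom{j}{k}$, which brings out the factor $\binom{j}{k}\binom{N-j}{k}$. Multiplying by the prefactor $2^N/\binom{N}{j}$ from Theorem~\ref{greturn} and rewriting $\alpha_n^{N-2k}\beta_n^{2k}=2^{-N}(1+(1-2p)^n)^{N-2k}(1-(1-2p)^n)^{2k}$ causes the $2^N$ to cancel. Interchanging the summations over $n$ and $k$, which is justified by absolute convergence since $|1-2p|<1$ forces geometric decay in $n$, produces the stated formula. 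The main obstacle is the bookkeeping of exponents during the factorisation of $P_i(Y_n=j)$ and matching the output to the form in the theorem; the decisive algebraic manoeuvre is the extraction of the common weight $(1+(1-2p)^n)^{N-2k}(1-(1-2p)^n)^{2k}$ and the isolation of the ratio raised to the power $i-j$, after which identification with the statement is routine.
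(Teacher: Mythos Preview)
Your proposal is correct and follows exactly the paper's approach: the paper obtains theorem~\ref{greturngen} by plugging formula~\eqref{genfor} and theorem~\ref{greturn} into theorem~\ref{general}, and your write-up carries out precisely this substitution, including the factorisation that isolates the ratio $\big((1-(1-2p)^n)/(1+(1-2p)^n)\big)^{i-j}$. One small correction: no interchange of the $n$- and $k$-summations is needed (the $k$-sum is finite and already appears inside the $n$-sum in the natural order), and in fact the double series is \emph{not} absolutely convergent term by term since each individual $k$-summand tends to a nonzero limit as $n\to\infty$; only the full $k$-sum, being the difference $P_j(Y_n=j)-P_i(Y_n=j)$, decays geometrically, so you should simply drop that sentence.
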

{\bf Strategy of the proof}.
We will employ the same method that Mark Kac used for the Ehrenfest model.
We shall try to compute
$E\big(\tau_j\,\big|\,Y_0=i\big)$ for $0\leq i,j\leq N$.
These computations are tricky. 
We will in fact compute the generating functions
of the event $\{\,Y_n=j\,\}$ and of the random variable $\tau_j$, and we will
relate them through a functional equation that we derive in the next section.
The mean passage times are equal to the left derivative at $1$ of the generating
function of $\tau_j$. We compute these derivatives by performing a local expansion of the
functions around~$1$.
Finally, in the last section, we prove a general formula for 
$E\big(\tau_j\,\big|\,Y_0=i\big)$ for $0\leq i,j\leq N$, which is based on the potential
theory for Markov chains.
\section{A classical probabilistic identity}
\label{clid}

%
Throughout the computations, we shall denote by $P_i$ the 
probability conditioned on the event that $Y_0=i$. 
Let us fix $0\leq i,j\leq N$. 
For $n\geq 1$, 
we compute $P_i(Y_n=j)$ by decomposing the
event $\{\,Y_n=j\,\}$ according to the hitting time $\tau_j$, as follows:
\begin{align*}
P_i(Y_n=j)\,&=\,
\sum_{k=1}^n
P_i(Y_n=j,\,\tau_j=k)\cr
\,&=\,
\sum_{k=1}^n
P_i\big(Y_1\neq j,\dots,Y_{k-1}\neq j, Y_k=j,Y_n=j\big)\,.
\end{align*}
We perform a conditioning in the probability in the sum and we get
\begin{multline*}
P_i(Y_n=j)\,=\,
\smash{\sum_{k=1}^n}
P_i\big(Y_1\neq j,\dots,Y_{k-1}\neq j, Y_k=j\big)\cr
\qquad\qquad\times
 P_i\big(Y_{n}=j\,\big|\,
Y_1\neq j,\dots,Y_{k-1}\neq j, Y_k=j
\big)\,.
\end{multline*}
The constitutive property of a Markov chain is that the past influences the future only
through the present state. This is rigorously formalized in the Markov property, which
yields that
$$ P_i\big(Y_{n}=j\,\big|\,
Y_1\neq j,\dots,Y_{k-1}\neq j, Y_k=j
\big)
\,=\,
P_i\big(Y_{n}=j\,\big|\, Y_k=j\big)\,.$$
Since in addition the Markov chain is time homogeneous, we have that
$$P_i\big(Y_{n}=j\,\big|\, Y_k=j\big)
\,=\,
P_j\big(Y_{n-k}=j\big)
\,.$$
Plugging the last two equalities in the sum, we conclude that
\begin{equation}
\label{iden}
P_i(Y_n=j)\,=\,
\sum_{k=1}^n
P_i\big(\tau_j=k\big)P_j\big(Y_{n-k}=j\big)\,.
\end{equation}
To take advantage of this identity, we introduce the generating functions
of the event $\{\,Y_n=j\,\}$ and of the random variable $\tau_j$, i.e.,
we consider the series
\begin{align*}
F_{ij}(z)\,&=\,
\sum_{n\geq 1}
P_i(Y_n=j)\,z^n\,,\cr
G_{ij}(z)\,&=\,
\sum_{n\geq 1}
P_i(\tau_j=n)\,z^n\,.
\end{align*}
Their radius of convergence is at least one.
In equation~\eqref{iden}, we isolate the term corresponding to $k=n$, and we multiply
by $z^n$ to get
\begin{equation*}
P_i(Y_n=j)z^n\,=\,
P_i\big(\tau_j=n\big)z^n\,+\,
\sum_{k=1}^{n-1}
P_i\big(\tau_j=k\big)z^k\,P_j\big(Y_{n-k}=j\big)z^{n-k}\,.
\end{equation*}
Looking at the sum, we recognize the Cauchy product of the two series. Summing this
identity, we conclude that
\begin{equation}
\label{idgen}
F_{ij}(z)\,=\,
G_{ij}(z)\,+\,
F_{jj}(z)\,
G_{ij}(z)\,.
\end{equation}
The strategy is now the following. We try to compute the functions $F_{ij}$.
We use then the above identity to obtain the functions $G_{ij}$.
Finally, the mean passage times can be computed from $G_{ij}$
by taking its left derivative at~$1$:
$$
E\big(\tau_j\,\big|\,Y_0=i\big)\,=\,
\sum_{n\geq 1} nP_i(\tau_j=n)\,=\,
G'_{ij}(1)\,.$$

\section{Single nucleotide dynamics}
We suppose here that $N=1$, i.e., we focus on the dynamics of a single nucleotide. 
In this case, the process
$(X_n)_{n\geq 0}$ is
the Markov chain 
with state space $\{0,1\}$ and transition
matrix
$$M\,=\,\begin{pmatrix}
\displaystyle 1-p & p \cr
	p & 1-p\cr
\end{pmatrix}\,.
$$
The eigenvalues of $M$ are $1$ and 
$1-2p$.
We compute, for $n\geq 1$,
$$M^n\,=\,
\frac{1}{2}
\begin{pmatrix}
	\displaystyle 1+ (1-2p)^n & 1-(1-2p)^n  \cr
	\displaystyle 1- (1-2p)^n & 1+(1-2p)^n  \cr
\end{pmatrix}\,.
$$
Here is a simple illuminating way to realize the dynamics and to
understand the expression of the $n$--th power $M^n$.
Let
$(\varepsilon_n)_{n\geq 1}$ be an i.i.d. sequence of Bernoulli random variables with
parameter $p$. At each time step, we use the variable $\varepsilon_n$ to decide
whether $X_n$ mutates or not. More precisely, we set
$$X_n\,=\,
\begin{cases}
X_{n-1}\quad&\quad\text{if}\quad \varepsilon_n=0\,,\cr
1-X_{n-1}\quad&\quad\text{if}\quad \varepsilon_n=1\,.\cr
\end{cases}
$$
Now, the event $X_n=X_0$ occurs if and only if the total number of
mutations which happened until time $n$ is even, i.e.,
$$
P(X_n=X_0)\,=\,P\big(\varepsilon_1+\cdots+\varepsilon_n\text{ is even}\big)\,.
$$
Let us set
$$S_n\,=\,\varepsilon_1+\cdots+\varepsilon_n\,.
$$
Here is a little trick to compute the probability that $S_n$ is even.
We compute in two different ways the expected value of $(-1)^{S_n}$. Indeed, we have
\begin{align*}
E\big((-1)^{S_n}\big)&\,=\,
\Big(
E\big((-1)^{\varepsilon_1}\big)\Big)^n
\,=\, \big(-p+1-p\big)^n
\,=\, \big(1-2p\big)^n
\cr
&\,=\,
P\big(S_n\text{ is even}\big)\,-\,
P\big(S_n\text{ is odd}\big)\,.
\end{align*}
Obviously, we have
$$
P\big(S_n\text{ is even}\big)\,+\,
P\big(S_n\text{ is odd}\big)\,=\,1\,,$$
therefore we obtain that
$$
	P(X_n=X_0)\,=\,
P\big(S_n\text{ is even}\big)\,=\,
	\frac{1}{2} \Big(1+(1-2p)^n\Big)\,.$$
This way we recover the expression of the diagonal coefficients of $M^n$.
Let us define
\begin{equation}
\label{albe}
p_n\,=\,
	\frac{1}{2} \Big(1+(1-2p)^n\Big)\,.
\end{equation}
From the above computations, 
we conclude the following. 
Conditionally on $X_0=1$, $X_n$ is a 
Bernoulli random variable with parameter $p_n$, i.e.,
$$P\big(X_n=1\,\big|\,X_0=1\big)\,=\,p_n\,,\qquad
P\big(X_n=0\,\big|\,X_0=1\big)\,=\,1-p_n\,.
$$
Similarly, conditionally on $X_0=0$, $X_n$ is a 
Bernoulli random variable with parameter $1-p_n$.
\section{Multiple nucleotides dynamic}
We consider now the case where the number $N$ of nucleotides is larger than one.
In our model, the mutations occur independently at each site. An important consequence
of this structural assumption is that the components of $X_n$,
$(X_n(i),1\leq i\leq N)$, are themselves Markov chains like the one studied in the previous
section, and these Markov chains are moreover independent.
This remark, combined with the results of the previous section, allows to derive 
explicitly the distribution of $Y_n$.
Indeed, suppose that we start from $Y_0=i$.
This means that $i$ digits in $X_0$ are equal to $1$ and $N-i$ to $0$.
At time $n$, in $X_n$, the $i$ digits which were initially equal to $1$ are distributed according
to a Bernoulli law of parameter $p_n$, the others
are distributed according
to a Bernoulli law of parameter $1-p_n$.
The evolution of the nucleotides being independent, these Bernoulli variables are independent, so their sum
is distributed as the sum of two independent Binomial random variables:
$$Y_n\,\sim\,\bino(i,p_n)+
\,\bino(N-i,1-p_n)\,.$$
This yields for instance the following formula:
\begin{multline}
	\label{genfor}
	P_i(Y_n=j)\,=\,
\kern-7pt
\sum_{\tatop{0\leq k\leq i}{ 0\leq j-k\leq N-i}}
\kern-10pt
P\big( \bino(i,p_n)=k\big)
P\big( 
\bino(N-i,1-p_n)
=j-k\big)\cr
	\hfill
	=\kern-7pt
\sum_{\tatop{0\leq k\leq i}{ 0\leq j-k\leq N-i}}
\kern-7pt
{i \choose k}
	{N-i \choose j-k}
	(1-p_n)^{i+j-2k}(p_n)^{N-i-j+2k}
\,.
\end{multline}
This formula is quite complicated. Yet it becomes particularly simple in the cases
where $i$ or $j$ is equal to $0$ or $N$. Indeed, we have, for $0\leq i\leq N$,
\begin{align*}
	P_i(Y_n=0)\,&=\,
(1-p_n)^i (p_n)^{N-i}\,,\cr
\qquad
	P_i(Y_n=N)\,&=\,
(p_n)^i (1-p_n)^{N-i}\,,
\end{align*}
and
for $0\leq j\leq N$,
\begin{align}
	P_0(Y_n=j)\,&=\,
{N \choose j}
	(1-p_n)^{j}(p_n)^{N-j}
	\label{trag}
	\,,\\
	\label{trai}
\qquad
	P_N(Y_n=j)\,&=\,
{N \choose j}(p_n)^j(1-p_n)^{N-j}
	\,.
\end{align}
For once, surprisingly enough, these two cases are also the most relevant for genetic applications,
so we treat them first. We will indeed compare these extreme cases to the general chain and deduce an estimation on the discovering and returning time.
\section{Proofs of theorems~\ref{trav} and~\ref{return}.} 
	This section is devoted to the completion of the proof of theorems~\ref{trav} and~\ref{return}. 
	We shall implement the strategy explained at the end of section~\ref{clid}.
Our first goal is to compute the generating function
$$F_{N0}(z)\,=\,
\sum_{n\geq 1}
P_N(Y_n=0)\,z^n\,.$$
From formulas~\eqref{trai} and~\eqref{albe}, we have
\begin{equation*}
P_N(Y_n=0)\,=\,
	(1-p_n)^{N}
	\,=\,
	\Big(
	\frac{1-
	(1-2p)^n
	}{2} 
	\Big)^N
	\,.
\end{equation*}
	We use the binomial expansion to develop the $N$--th power in order to compute
	the generating function $F_{N0}$ as a sum of geometric series:
\begin{equation}
	\label{gutq}
P_N(Y_n=0)\,=\,
 \frac{1}{2^N}\sum_{k=0}^N \binom{N}{k}(-1)^k (1-2p)^{nk}.
\end{equation}
Notice that $P_N\big(Y_0=0\big)=0$. For convenience, we start the sum defining $F_{N0}$ at $n=0$
and we obtain a finite number of geometric series:
\begin{align*}
	F_{N0}(z)&\,=\,
\sum_{n\geq 0}
P_N(Y_n=0)\,z^n\\
&\,=\,\sum_{n \geq 0} 
 \frac{1}{2^N}\sum_{k=0}^N \binom{N}{k}(-1)^k (1-2p)^{nk}
	z^n\\
&\,=\, \frac{1}{2^N}\sum_{k=0}^N \binom{N}{k}
	\frac{ (-1)^k }{1-(1-2p)^k z}\,. 
\end{align*}
Our next goal is to compute the generating function
$$F_{00}(z)\,=\,
\sum_{n\geq 1}
P_0(Y_n=0)\,z^n\,.$$
From formulas~\eqref{trag} and~\eqref{albe}, we have, after binomial expansion:
\begin{align}
	P_0(Y_n=0)&\,=\,
	(p_n)^{N}
	\,=\,
	\Big(
	\frac{1+
	(1-2p)^n
	}{2} 
	\Big)^N\cr
	&\,=\,
 \frac{1}{2^N}\sum_{k=0}^N \binom{N}{k} (1-2p)^{nk}
	\,.
	\label{gutr}
\end{align}
This time, we have
	$P_0\big(Y_0=0\big)=1$. Adding this term to $F_{00}$, we get again nice geometric series:
\begin{align*}
	1+F_{00}(z)&\,=\,
\sum_{n\geq 0}
P_0(Y_n=0)\,z^n\\
&\,=\,\sum_{n \geq 0} 
 \frac{1}{2^N}\sum_{k=0}^N \binom{N}{k} (1-2p)^{nk}
	z^n\\
&\,=\, \frac{1}{2^N}\sum_{k=0}^N \binom{N}{k}
	\frac{ 1 }{1-(1-2p)^k z}\,. 
\end{align*}
For $0\leq k\leq N$, 
we introduce the auxiliary functions
$$
\phi_k(z) = \binom{N}{k} \frac{1}{1-(1-2p)^k z}\,,
$$
and we rewrite the expressions of 
$F_{N0}$ and $1+F_{00}$ as
\begin{align}
	\label{foo}
		F_{N0}(z)\,=\,
 \frac{1}{2^N}\sum_{k=0}^N  (-1)^k\phi_k(z)
 \,,\cr
	1+F_{00}(z)\,=\,
 \frac{1}{2^N}\sum_{k=0}^N  \phi_k(z)
 \,.
\end{align}
We have computed $F_{N0}$ and $1+F_{00}$. From the probabilistic identity~\eqref{idgen}, 
we obtain
$$
G_{N0}(z)\,=\,
\frac{F_{N0}(z)}{1+F_{00}(z)}\,.$$
Remember that our ultimate goal is to compute the left derivative of $G_{N0}$ at $1$.
The functions $\phi_k$ are regular around $1$, except the first one, $\phi_0$, indeed,
$$\phi_0(z)\,=\,\frac{1}{1-z}\,.$$
To get $G'_{N0}(1)$, we perform a local expansion of $G_{N0}$ around $1$, as follows:
\begin{align*}
G_{N0}(z) 
	&\,=\,
\frac{\displaystyle \frac{1}{1-z}+ \sum_{k=1}^N  (-1)^k\phi_k(z)}
{\displaystyle \frac{1}{1-z}+ \sum_{k=1}^N  \phi_k(z)}\\
	&\,=\,
	\displaystyle {1}+({1-z}) \sum_{k=1}^N  \big((-1)^k-1\big)\phi_k(z)+o(z-1)\,.
\end{align*}
This expansion readily yields the value of the left derivative of $G_{N0}$ at $1$:
$$G'_{N0}(1) \,=\,
	 \sum_{k=1}^N  \big(1-(-1)^k\big)\phi_k(1)\,.
$$
Replacing $\phi_k(1)$ by its value, we obtain the formula stated in theorem~\ref{trav}.
We proceed similarly to prove
theorem~\ref{return}. In fact, we have to compute $G'_{00}(1)$, and the 
probabilistic identity~\eqref{idgen} yields
$$
G_{00}(z)
\,=\,
\frac{F_{00}(z)}{1+F_{00}(z)}
\,=\,1-
\frac{1}{1+F_{00}(z)}\,.$$
We have already computed $1+F_{00}(z)$ in formula~\eqref{foo}. We use this expression
and we expand around $z=1$:
\begin{equation*}
G_{00}(z) \,=\,1- \frac{\displaystyle 2^N}{
	\displaystyle\sum_{k=0}^N  \phi_k(z)}
\,=\,1- 2^N (1-z)+o(1-z)\,.
\end{equation*}
This expansion shows that
$G'_{00}(1)=2^N$.

\section{Proof of theorem~\ref{greturn}}
We shall finally prove the analog of Kac theorem on the mean returning time to the class $j$,
when the process starts from the class $j$.
We write the formula~\eqref{genfor} with $i=j$, we reindex the sum by setting
$\ell=j-k$
and we perform the two binomial expansions:
\begin{multline*}
	P_j(Y_n=j)\,=\,
\cr
	\hfill
	\kern-7pt
\sum_{\tatop{0\leq k\leq j}{ 0\leq j-k\leq N-j}}
\kern-7pt
{j \choose k}
	{N-j \choose j-k}
	\Big(
	\frac{ 1-(1-2p)^n }{2} 
	\Big)^{2j-2k}
	\Big(
	\frac{ 1+(1-2p)^n }{2} 
	\Big)^{N-2j+2k}
\cr
	\hfill
	\kern-7pt
\,=\,
	\sum_{\ell=0}^{ j\land{(N-j)}}
{j \choose \ell}
	{N-j \choose \ell}
	\Big(
	\frac{ 1-(1-2p)^n }{2} 
	\Big)^{2\ell}
	\Big(
	\frac{ 1+(1-2p)^n }{2} 
	\Big)^{N-2\ell}
	\hfill
	\cr
\,=\,
	\kern-7pt
	\sum_{\ell=0}^{ j\land{(N-j)}}
	\kern-5pt
	\frac{ 1 }{2^N} 
{j \choose \ell}
	{N-j \choose \ell}
	\sum_{\alpha=0}^{2\ell} 
\sum_{\beta=0}^{N-2\ell} 
	\binom{2\ell}{\alpha} 
\binom{N-2\ell}{\beta} 
	(-1)^{\alpha}(1-2p)^{(\alpha+\beta) n} 
\,.
\end{multline*}
	For $n=0$, we have
	$P_j(Y_0=j)=1$, therefore, after a geometric summation, we get
	$$\displaylines{
	1+F_{jj}(z)\,=\,
\sum_{n\geq 0}
P_j(Y_n=j)\,z^n\hfill\cr
	\,=\,
	\kern-7pt
	\sum_{\ell=0}^{ j\land{(N-j)}}
	\kern-5pt
	\frac{ 1 }{2^N} 
{j \choose \ell}
	{N-j \choose \ell}
	\sum_{\alpha=0}^{2\ell} 
\sum_{\beta=0}^{N-2\ell} 
	\binom{2\ell}{\alpha} 
\binom{N-2\ell}{\beta} 
	\frac{(-1)^\alpha}{1-(1-2p)^{\alpha +\beta } z}
	\,.}$$
We expand this function around $z=1$ and we get
\begin{align*}
	1+F_{jj}(z)&\,=\,
	\kern-7pt
	\sum_{\ell=0}^{ j\land{(N-j)}}
	\kern-5pt
	\frac{ 1 }{2^N} 
{j \choose \ell}
	{N-j \choose \ell}
	\frac{1}{1-z}+O(1)\cr
	&\,=\,
	\frac{ 1 }{2^N} 
	{N \choose j}
	\frac{1}{1-z}+O(1)\,,
\end{align*}
thanks to the combinatorial identity stated in the next lemma.
\begin{lemma}\label{binom}
For~$0\leq j\leq N$, we have
$$
	\sum_{\ell=0}^{ j\land{(N-j)}}
	\kern-5pt
{j \choose \ell}
	{N-j \choose \ell}\,=\,
	{N \choose j}\,.
$$
\end{lemma}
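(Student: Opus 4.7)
The plan is to recognize this as a symmetric rewriting of the Vandermonde convolution identity. Recall that Vandermonde's identity states that, for nonnegative integers $a,b,r$,
$$\binom{a+b}{r}\,=\,\sum_{k=0}^{r}\binom{a}{k}\binom{b}{r-k}\,,$$
with the convention that $\binom{n}{m}=0$ for $m>n$. Applying this with $a=j$, $b=N-j$ and $r=j$, we obtain immediately
$$\binom{N}{j}\,=\,\sum_{k=0}^{j}\binom{j}{k}\binom{N-j}{j-k}\,.$$

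The second step is to reindex by setting $\ell = j-k$ and to use the symmetry $\binom{j}{j-\ell}=\binom{j}{\ell}$. This transforms the sum into
$$\binom{N}{j}\,=\,\sum_{\ell=0}^{j}\binom{j}{j-\ell}\binom{N-j}{\ell}\,=\,\sum_{\ell=0}^{j}\binom{j}{\ell}\binom{N-j}{\ell}\,,$$
which is exactly the identity to prove, except that the summation runs up to $j$ instead of $j\wedge(N-j)$. The final step is to observe that, for $\ell>N-j$, the factor $\binom{N-j}{\ell}$ vanishes, so the sum can harmlessly be truncated at $j\wedge(N-j)$, yielding the stated formula.

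Alternatively, one can give a direct combinatorial proof: choose a $j$-subset of $\{1,\dots,N\}$ by first fixing the split of this set into its first $j$ elements and its last $N-j$ elements; if the chosen subset contains exactly $\ell$ elements from the second block, it must contain $j-\ell$ elements from the first block, which gives $\binom{j}{j-\ell}\binom{N-j}{\ell}=\binom{j}{\ell}\binom{N-j}{\ell}$ choices. Summing over the admissible values $0\leq\ell\leq j\wedge(N-j)$ recovers the total count $\binom{N}{j}$.

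There is no real obstacle here; the only point requiring a moment of care is the matching of summation indices and the verification that the upper bound $j\wedge(N-j)$ coincides with the support of the nonzero terms.
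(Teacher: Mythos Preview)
Your proof is correct. In fact, the paper proves the lemma precisely by your alternative combinatorial argument: it fixes a reference $j$-subset $A$ of an $N$-set and classifies all $j$-subsets according to the size of their intersection with $A$, which is exactly your second-block count after the symmetry $\binom{j}{j-\ell}=\binom{j}{\ell}$. Your first argument, via Vandermonde plus reindexing, is just a prepackaged version of the same double counting, so there is no substantive difference in approach.
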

\begin{proof}
	Let us fix $j$ in $\{\,0,\dots,N\,\}$, and let us consider a set $E$
	having cardinality $N$.
	We fix also a subset $A$ of $E$ having $j$ elements.
	We classify the subsets of $E$ having cardinality $j$ according
	to the cardinality of their intersection with $A$ and we readily obtain
	the formula of the lemma.
\end{proof}

\noindent
From the probabilistic identity~\eqref{idgen}, 
we have
$$
G_{jj}(z)
\,=\,1-
\frac{1}{1+F_{jj}(z)}\,,$$
thus
$G_{jj}(z)$ admits the following expansion around $z=1$:
$$
G_{jj}(z)\,=\,
	1+
	\frac{2^N}{\displaystyle {N \choose j}}
	({z-1})+o(z-1)\,.
$$
From this expansion, we infer that 
$$E\big(\tau_j\,\big|\,Y_0=j\big)\,=\,
G'_{jj}(1)\,=\,
	\frac{2^N}{\displaystyle {N \choose j}}
	$$
	and this concludes the proof of theorem~\ref{greturn}.
\section{ Proof of theorem~\ref{general}}
It has been observed a long time ago that the equations defining the invariant measure, or the 
returning time to a set for a random walk, or more generally a Markov chain, are formally
equivalent to the equations arising in potential theory, if one interprets the transition probabilities
as conductances (see the very nice book \cite{doyle}). 
In fact, the formula presented in theorem~\ref{general} takes its roots in potential theory \cite{neveu}.
Let us denote by $P$ the transition matrix of the process
$(Y_n)_{n\geq 0}$, defined by
$$\forall i,j\in\{\,0,\dots,N\,\}\quad\forall n\geq 0
\qquad
P(i,j)\,=\,P(Y_{n+1}=j\,|\,Y_n=i)\,.$$
The arguments presented below are in fact valid for a general class of Markov chains with finite
state space.
For instance, it suffices that $P$, or one of its powers, has all its entries positive. In this 
situation, the classical ergodic theorem for Markov chains ensures the existence and uniqueness
of an invariant probability measure and the following convergence holds:
\begin{equation}
	\label{ergodic}
\forall i,j\in\{\,0,\dots,N\,\}
\qquad
\lim_{n\to\infty}
P^n(i,j)\,=\,\frac{1}{ E\big(\tau_j\,\big|\,Y_0=j\big) }\,.
\end{equation}
From now on, we fix $j$ in 
$\{\,0,\dots,N\,\}$ and we try to compute
$E\big(\tau_j\,\big|\,Y_0=i\big) $. The idea is to study
the behavior of the Markov chain until the time~$\tau_j$. To do so, we introduce
the companion matrix $G$ defined by
$$\forall i,k\in\{\,0,\dots,N\,\}\qquad
G(i,k)\, =\, E_i\Big(\sum_{n=0}^{\tau_j-1}1_{\{Y_n=k\}}\Big)\, .  $$
The matrix $G$ is called the potential matrix associated with the restriction 
of $P$ to 
$\{\,0,\dots,N\,\}\setminus \{j\}$. 
The quantity $G(i,k)$ represents the average number of visits of the state $k$ before reaching the state $j$
when starting from $i$. We introduce also the matrix $H$ given by
$$\forall i,k\in\{\,0,\dots,N\,\}\qquad
H(i,k)\, =\, 
\begin{cases}
1&\quad\text{if}\quad k=j\,\cr
0&\quad\text{if}\quad k\neq j\,\cr
\end{cases}
\, .  $$
The matrix $H$ describes the distribution of the exit point from the set 
$\{\,0,\dots,N\,\}\setminus \{\,j\,\}$. In our case, it is necessarily the Dirac mass on $j$, yet
in the general case, the matrix $H$ is more involved!
%
%
%
The three matrices $P,G,H$ are linked through a simple identity.
\begin{lemma}
	\label{lempot}
	Denoting by $I$ the identity matrix, 
we have
$$GP=H+G-I\, .
$$
\end{lemma}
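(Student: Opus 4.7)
The plan is to expand the matrix product on the left directly and use the strong Markov property to identify it with a shifted occupation counter. Writing out $GP(i,k) = \sum_{l=0}^N G(i,l)P(l,k)$ and plugging in the definition of $G$, I would first interchange the sum over $l$ with the finite expectation defining $G(i,l)$, so as to get
$$GP(i,k) \,=\, E_i\bigg(\sum_{n=0}^{\tau_j - 1} \sum_{l=0}^N 1_{\{Y_n = l\}} P(l,k)\bigg)\,=\, E_i\bigg(\sum_{n=0}^{\tau_j - 1} P(Y_{n+1}=k\,|\,\cF_n)\bigg).$$
Since $\tau_j$ is a stopping time, the indicator $1_{\{n \leq \tau_j - 1\}} = 1_{\{\tau_j > n\}}$ is $\cF_n$-measurable, so I can pull it inside the conditional expectation and then remove the conditioning. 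This turns the right-hand side into $E_i\big(\sum_{n=0}^{\tau_j-1} 1_{\{Y_{n+1}=k\}}\big)$.

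Next I would perform the index shift $m = n+1$, which yields
$$GP(i,k)\,=\, E_i\bigg(\sum_{m=1}^{\tau_j} 1_{\{Y_m = k\}}\bigg).$$
This new sum runs from $1$ to $\tau_j$ inclusive, whereas $G(i,k)$ is the sum from $0$ to $\tau_j - 1$. The difference between the two intervals amounts to adding the endpoint $m=\tau_j$ and removing the endpoint $m=0$, so
$$GP(i,k)\,=\,G(i,k) \,+\, E_i\big(1_{\{Y_{\tau_j}=k\}}\big) \,-\, E_i\big(1_{\{Y_0=k\}}\big).$$

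The last step is to identify the two correction terms. Since $Y_{\tau_j} = j$ by the very definition of $\tau_j$, the first indicator equals $1$ when $k=j$ and $0$ otherwise, which is precisely $H(i,k)$. Since $Y_0 = i$ under $P_i$, the second indicator equals $1$ when $k=i$ and $0$ otherwise, i.e.\ $I(i,k)$. Substituting gives $GP = G + H - I$, as claimed. The only delicate point is the first step, where one must check that the Markov property can be invoked inside the random sum up to the stopping time $\tau_j$; this is handled by the $\cF_n$-measurability of $\{\tau_j > n\}$ together with the finiteness of $E_i(\tau_j)$ (ensured by the ergodic hypothesis~\eqref{ergodic}), which legitimates the interchange of sum and expectation throughout.
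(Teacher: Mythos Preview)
Your proof is correct and follows essentially the same route as the paper: expand $GP(i,k)$, invoke the Markov property together with the $\cF_n$--measurability of $\{\tau_j>n\}$ to turn it into the expected shifted occupation count $E_i\big(\sum_{m=1}^{\tau_j}1_{\{Y_m=k\}}\big)$, and then compare with $G(i,k)$. Your endpoint bookkeeping (adding $m=\tau_j$, removing $m=0$) is in fact slightly cleaner than the paper's, which instead separates the cases $k=j$ and $k\neq j$ and handles each by hand.
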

\begin{proof}
The matrix $G$ encodes the behaviour of the process until it hits $j$. Multiplying on the right $G$ 
by the transition matrix $P$ amounts to perform one further step of the process. 
This step might either stay inside 
$\{\,0,\dots,N\,\}\setminus \{\,j\,\}$, in which case we recover the matrix $G-I$, or it might land in
$j$, and this is where the matrix $H$ enters the game.
Let us make this argument rigorous.
We have to check that
$$\forall i,k
\in\{\,0,\dots,N\,\}\qquad
	GP(i,k)\,=\,H(i,k)+G(i,k)-I(i,k)\, .
	$$
For $i,k
\in\{\,0,\dots,N\,\}$, we compute
\begin{align*}
GP(i,k) \, =\,
 & \sum_{0\leq\ell\leq N}\, G(i,\ell)\,P(\ell,k)
	\\
\,=\, & 
  \sum_{0\leq\ell\leq N}
	E_i\Big(\sum_{n\geq 0 }1_{\{\tau_j>n\}}1_{\{Y_n=\ell\}}\Big)
	\,P(\ell,k) \\
\,=\, & 
  \sum_{0\leq\ell\leq N}
	\sum_{n\geq 0 }
	P_i\Big(
	\tau_j>n,\,Y_n=\ell\Big)
	\,P\big(Y_{n+1}=k\,\big|\,Y_n=\ell\big) \\
\,=\, & \sum_{n\geq 0 }
  \sum_{0\leq\ell\leq N}
	P_i\Big(
	\tau_j>n,\,Y_n=\ell,\,
	Y_{n+1}=k \Big)
	\\
\,=\, & 
	P(i,j)+
	\sum_{n\geq 1 }
	P_i\Big(
	\tau_j>n,\,
	Y_{n+1}=k \Big)\,.
\end{align*}
We consider now two cases.
If $k=j$, the formula becomes
$$GP(i,j) \,=\,
	\sum_{n\geq 0 }
	P_i\big(
	\tau_j=n+1
	\big)
\,=\,  1\,=\,
	H(i,j)+G(i,j)-I(i,j)\, .
	$$
If $k\neq j$, the formula becomes
\begin{align*}
	GP(i,k) &\,=\,
	P(i,k)+
	\sum_{n\geq 1 }
	P_i\Big(
	\tau_j>n+1,\,
	Y_{n+1}=k \Big)\\
	 &\,=\,
	G(i,k)-I(i,k)	
	\,.
\end{align*}
This ends the proof, since $H(i,k)=0$ in this case.
\end{proof}

\noindent
We complete finally the proof of theorem \ref{general}.
We multiply the formula of lemma~\ref{lempot} by $P^n$ and we sum from $0$ to $m$ to obtain
$$
G -G P^{m+1}\,=\,\sum_{n=0}^m \big( P^n -H P^n\big) \,.
$$ 
We focus on the coefficients $(i,j)$ of the matrices and we send $m$ to $\infty$:
$$
\lim_{m\rightarrow \infty} 
\big(G(i,j)
-GP^m(i,j)
\big)\,=\,\sum_{n\geq0} \big( P^n(i,j) - P^n(j,j)\big).
$$
Now $G(i,j)=0$ and
from the convergence~\eqref{ergodic}, we have
\begin{equation*}
\lim_{m\rightarrow \infty}GP^m(i,j)\,=\,
	\Big(\sum_{k=0}^N G(i,k)\Big)\times
	\frac{1}{E\big(\tau_j\,\big|\,Y_0=j\big)}\, .
\end{equation*}
Noticing that
	$$\sum_{k=0}^N G(i,k)\,=\,
	{E\big(\tau_j\,\big|\,Y_0=i\big)}\,,$$
	and putting together the previous identities, we obtain the formula stated in
	theorem~\ref{general}. It then suffices to replace the probabilities with their expression to get theorem \ref{greturngen}.

\bibliographystyle{plain}

\thispagestyle{empty}

\end{document}